\definecolor{MyDarkBlue}{rgb}{0,0.08,0.45}
\newcommand{\REVTWO}[1]{{#1}}
\newcommand{\REV}[1]{{#1}}
\newcommand{\set}[1]{\ensuremath{ \left \{ #1 \right \}}}
\renewcommand{\P}{\ensuremath{\mathcal{ P }}}
\newcommand{\Q}{\ensuremath{\mathcal{ Q }}}
\newcommand{\R}{\ensuremath{\mathbb{R}}}
\newcommand{\func}{\rightarrow}
\newcommand{\Min}[1]{\ensuremath{\text{min} \left \{ #1 \right \}}}
\newcommand{\Max}[1]{\ensuremath{\text{max} \left \{ #1 \right \}}}
\newcommand{\0}{\ensuremath{\boldsymbol{0}}}
\newcommand{\1}{\ensuremath{\boldsymbol{1}}}
\newcommand{\RR}{\ensuremath{\mathbb{R}}}
\newcommand{\QQ}{\ensuremath{\mathbb{Q}}}
\DeclareMathOperator{\conv}{conv}
\newcommand{\defeq}{\ensuremath{:=}}
\newcommand{\be}{\begin{linenomath}\begin{equation}}
\newcommand{\benn}{\begin{linenomath}\begin{equation}\nonumber}
\newcommand{\ee}{\end{equation}\end{linenomath}}
\newcommand{\bml}{\begin{multline}}
\newcommand{\bmlnn}{\begin{linenomath}\begin{multline}\nonumber}
\newcommand{\eml}{\end{multline}}
\newcommand{\ba}{\begin{align}}
\newcommand{\ea}{\end{align}}
\newcommand{\sm}{\setminus}
\newcommand{\ind}[1]{\ensuremath{\chi_{#1}}}
\renewcommand{\sup}[1]{\ensuremath{S_{#1}}}
\newtheorem{proposition}{Proposition}
\theoremstyle{definition}
\newtheorem{definition}{Definition}
\newtheorem{example}{Example}
\theoremstyle{remark}
\newtheorem{remark}{Remark}
\newcommand{\polarP}{\ensuremath{\P_f^{\circ}}}
\begin{document}

%\title[Submodular Utility Function Maximization]{A Polyhedral Approach to Maximizing a Class of Submodular Utility Functions}
\title[Submodular Function Minimization and Polarity]{Submodular function Minimization and Polarity}
\author{Alper Atamt{\"u}rk and Vishnu Narayanan}

%\subjclass[2000]{Primary ...; Secondary ...}

\thanks{
%This research is supported, in part, by
%Grant \# DMI0700203 from the National Science Foundation.
A. Atamt\"urk: Department of Industrial Engineering and Operations
Research, University of California, Berkeley, CA 94720-1777 USA
\texttt{atamturk@berkeley.edu} \\
V. Narayanan: Industrial Engineering and Operations
Research, Indian Institute of Technology Bombay, Mumbai India 400076
\texttt{vishnu@iitb.ac.in}
}

\begin{linenomath}
\begin{abstract}
Using polarity, we give an outer polyhedral approximation for the epigraph of set functions.
For a submodular function, we prove that the corresponding polar relaxation is exact;
hence, it is equivalent to the Lov\'asz extension.
The polar approach provides an alternative proof for the convex hull description
of the epigraph of a submodular function. 
Computational experiments show that the inequalities from outer approximations can be effective as 
cutting planes for solving submodular as well as non-submodular set function minimization problems.

\vskip 1mm \noindent \textsl{Keywords:}
Polarity,  Lov\'asz extension, submodular functions, polymatroids, greedy algorithm, cutting
planes.

\vskip 3mm \normalsize \centering December 2019;  \REVTWO{November 2020}

\end{abstract}
\end{linenomath}

\ignore{
In this paper, we describe a polar relaxation of a set function. Whereas the Lov\'asz extension is
an inner piecewise polyhedral approximation of convex hull of the epigraph of a set function, the polar relaxation is
a convex polyhedral outer approximation.
For a submodular function, this outer polar relaxation is equal to the Lov\'asz extension, and thus 
provides an alternative proof of the convex hull description of the epigraph a
submodular function. 
%This description leads to an efficient separation by the greedy
%algorithm, which is employed in a cutting plane algorithm for submodular function
%minimization as well as problems with submodular constraints.
Computational experiments on a number of specific submodular functions arising in queueing, risk management,
constrained min-cuts show the effectiveness of the approach.
}

\maketitle

\BCOLReport{19.02}%{Mathematical Programming}

\setcounter{page}{1}

% Running line numbers:
%\linenumbers

\vspace{-10mm}

\section{Introduction} \label{sec:intro}

Given a finite ground set $N$ and a rational valued set function $f: 2^N \rightarrow \QQ$, we
consider finding the minimum value of $f$:
\be \label{prob:submodmin}
\text{min}_{S \subseteq N} f(S).
\ee

Throughout, by abuse of notation,
we also use $f(x)$, where
$x \in \set{0,1}^N$ is the indicator vector for the set of subsets of $N$.
Introducing an auxiliary variable $z$ for the objective value, let us restate 
problem \eqref{prob:submodmin} as
\be
\min \set{z : (x,z) \in \Q_f}, \
\ee
where $\Q_f$ is the convex hull of the epigraph of $f$, i.e.,
\benn
\Q_f \defeq \conv \set{(x,z) \in \set{0,1}^N \times \RR: f(x) \le z} \cdot
\ee

%$\Q_f$ is the convex hull of the epigraph of $f$. 
%Because it is the convex hull of the disjunction of $2^n$ polyhedra (for
%each assignment of $x$), $\Q_f$ is a polyhedron as well.

In this paper, we give a polyhedral relaxation for $\Q_f$ based on polarity.
Whereas the Lov\'asz extension \citep{L:convex-ext} for a set function gives
an inner piecewise polyhedral approximation of the convex hull of its epigraph, the polar relaxation is
a (convex) polyhedral outer approximation. 

A set function $f: 2^N \rightarrow \QQ$ is \textit{submodular} if
	\benn
	f(S \cup T) + f(S \cap T) \le f(S) + f(T)  \ \text{for all} \ S, T \subseteq N.
	\ee
For a submodular set function, we show that the corresponding polar relaxation is exact; hence, it is
equivalent to the Lov\'asz extension. The polar approach, thus, provides a new polyhedral proof 
of the convex hull description of the epigraph of a submodular function.
% A set function is \textit{supermodular} if $-f$ is submodular. It is well-known that 
%any set function can be written as a sum of a submodular and supermodular function.
Furthermore, through computational experiments, 
we show that the inequalities from outer approximations can be effective as 
cutting planes for solving submodular as well as non-submodular set function minimization problems.

\subsubsection*{A short literature review}

Submodular set functions play an important role in many fields and have received much interest in the literature \cite{E:submodular,EG:minmax-sub,topkis-minsub}.
Combinatorial optimization problems such as the min-cut problem, entropy minimization, matroids, 
binary quadratic function minimization with a non-positive matrix, among many others, are special cases.
Submodular functions can be minimized in polynomial time \citep{GLS:ellipsoid,iwata:2001,O:faster,S:combinatorial}. 
For comprehensive reviews on this subject
we refer the reader to \cite{F:submodularBook,I:submodular-survey,S:CObook}.
While the majority of research on submodularity \REV{is} devoted to optimization on binary variables,
submodularity has been useful for deriving strong inequalities for mixed 0--1 optimization as well
\cite{AG:m-matrix,AG:sub-cqmip,AJ:lifted-poly,AKT:pathcover,AN:conicobj,W:submodular-fc}. 
Unlike minimization, maximization of submodular functions is \NP-hard (the max-cut problem is a special case).
Most of the research for this case has been on approximation algorithms. The greedy algorithm and its extensions \cite{AG:max-util-poly,feige:maxsub,lee2010maximizing,NW:best-max,NWFM:maxsub} provide constant factor approximation. There is so far limited work 
on polyhedral analysis of submodular function maximization \cite{AA:max-util,LNG:maxsub,NW:submax-form,yu2017maximizing}.

\ignore{
	\REV{\citet{E:submodular} shows} that $\pi$ is an extreme point of $\P_f$ if and only if $\pi_i = f(S_{(i)}) - f(S_{(i-1)})$,
	where $S_{(i)} = \set{(1), (2), \ldots, (i)}$,  $1 \le i \le n$ for some
	permutation $((1), (2), \ldots, (n))$ of $N$.
	We refer to inequality $\pi x \le z$ defined by an extreme point $\pi$ of the extended polymatroid $\P_f$ as
	the \textit{extended polymatroid inequalities}.
	If $f$ is nondecreasing submodular, then $\P_f$ is called a polymatroid, in which case we refer to the inequalities simply as the \textit{polymatroid inequalities}.
}

\subsubsection*{Assumption and notation}
Without loss of generality, we assume that $f(\emptyset) = 0$ as,
otherwise, one can solve the equivalent minimization
problem for $f':= f - f(\emptyset)$, i.e.,
with $f'(S) = f(S) - f(\emptyset)$ for all $S \subseteq N$.
Let $\ind{S}$ be the indicator vector for a set $S \subseteq N$
and $\sup{x}$ be the support set of a binary vector $x \in \set{0,1}^N$.
For a vector $x \in \RR^N$ and $S \subseteq N$, define $x(S) := \sum_{i \in S} x_i$.

\section{Polar outer approximation}
\label{sec:polar}

\ignore{For submodular $f$, we refer to $\Q_f$ simply as the \textit{submodular
function polyhedron}.}

We start this section with a simple property of the facets of $\Q_f$. We refer to the variable bounds $\0 \le x \le \1$ as
the \textit{trivial inequalities} of $\Q_f$.

\begin{proposition} \label{prop:facet0}
Any non-trivial facet-defining inequality $\pi x \le  \alpha z + \pi_0$ for $\Q_f$
satisfies $\pi_0 \ge 0$ and $\alpha =1$ (up to scaling). 
\end{proposition}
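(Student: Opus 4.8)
The plan is to determine $\alpha$ first, using the recession structure of $\Q_f$, and then read off $\pi_0 \ge 0$ from a single point evaluation. Two preliminary facts drive the argument. First, $\Q_f$ is full-dimensional in $\RR^N \times \RR$: using $f(\emptyset) = 0$, the $n+2$ points $(\0,0)$, $(\0,1)$ and $(\ind{\{i\}}, f(\{i\}))$, $i \in N$, are affinely independent. Second, the direction $(\0,1)$ lies in the recession cone of $\Q_f$, since $(x,z) \in \Q_f$ and $z' \ge z$ imply $(x,z') \in \Q_f$. The second fact immediately gives $\alpha \ge 0$: if $\alpha < 0$, then fixing any $(x,z) \in \Q_f$ and letting $z \to \infty$ violates $\pi x \le \alpha z + \pi_0$.

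Next I would rule out $\alpha = 0$ for a non-trivial facet. Suppose $\pi x \le \pi_0$ (with vanishing $z$-coefficient) is facet-defining, and let $F$ be the facet it defines, so $\dim F = n$; note $\pi \ne 0$. Projecting onto $x$-space, $\proj_x \Q_f = [0,1]^N$ because projection commutes with taking convex hulls, and $\proj_x F \subseteq \{x \in [0,1]^N : \pi x = \pi_0\}$, a face of the cube. Since the coordinate projection $\RR^N \times \RR \to \RR^N$ has one-dimensional kernel, $\dim \proj_x F \ge \dim F - 1 = n - 1$; and $\dim \proj_x F \le n-1$ because it lies in the hyperplane $\pi x = \pi_0$. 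Hence $\proj_x F$ is a facet of $[0,1]^N$, which forces $\pi x \le \pi_0$ to be, up to positive scaling, one of the bounds $x_i \le 1$ or $-x_i \le 0$, i.e., a trivial inequality — contradicting the hypothesis. Therefore a non-trivial facet has $\alpha > 0$, and dividing through by $\alpha$ normalizes it to $\alpha = 1$.

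It remains to see $\pi_0 \ge 0$: with $\alpha = 1$ the inequality reads $\pi x \le z + \pi_0$, and substituting $(\0, 0) \in \Q_f$ (valid since $f(\emptyset) = 0$) yields $0 \le \pi_0$. I expect the only delicate point to be the dimension bookkeeping in the $\alpha = 0$ case — precisely, the claim that a facet on which $z$ ranges freely projects to a facet of the cube — which rests on the fact that the vertical projection can drop dimension by at most one. Everything else is a routine recession-cone observation and a one-point substitution.
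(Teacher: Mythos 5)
Your proof is correct and follows essentially the same route as the paper's: the ray $(\0,1)$ gives $\alpha \ge 0$, evaluating at $(\0,0)$ gives $\pi_0 \ge 0$, and the case $\alpha = 0$ is excluded by showing such an inequality must be trivial. The only difference is in that last step, where the paper simply observes that a valid $\pi x \le \pi_0$ holds at every binary $x$ (since $(x,f(x)) \in \Q_f$) and is therefore implied by the bounds $\0 \le x \le \1$, whereas your projection and dimension-counting argument reaches the same conclusion with somewhat more machinery --- both are sound.
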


\begin{proof}
Note that $\pi_0 \ge 0$ is necessary for validity, since otherwise
inequality $\pi x \le z + \pi_0$ is invalid for $(\0,0) \in \Q_f$. 
Because $(\0, 1)$ is a ray of $\Q_f$, inequality is invalid unless $\alpha \ge 0$. However, any valid
$\pi x \le \pi_0$ is implied by the trivial inequalities $\0 \le x \le \1$ as
$\set{(x,f(x)): x \in \set{0,1}^n } \subseteq \Q_f$. Thus, $\alpha > 0$ and, by scaling, it may be assumed to be one.
\end{proof}

For a set function $f$ with $f(\emptyset)=0$, let the \textit{associated polyhedron}\footnote{For the submodular case, $\P_f$ is referred to as the submodular polyhedron or the extended polymatroid.} be
\benn
\P_f := \set{\pi \in \RR^N: \pi(S) \le f(S) \text{ for all } S \subseteq N}.
\ee
%where $\pi(S)$ denotes $\sum_{i \in S} \pi_i, \ S \subseteq N$. 
Consider the polar of $\P_f$
\[
\polarP := \set{(x,z) \in \RR^N \times \RR: \pi x \le z, \text{ for all } \pi \in \P_f } \cdot
\]

The next proposition \REV{from  \citet{AN:conicobj} }
shows a polarity relationship between $\P_f$
and the homogeneous ($\pi_0 = 0$) valid inequalities for $\Q_f$. 
%\REV{We restate the short proof for completeness.}

\begin{proposition} \REV{\cite{AN:conicobj}} \label{prop:polar}
Inequality $\pi x \le z$ is valid for $\Q_f$ if and only if $\pi \in P_f$.
\end{proposition}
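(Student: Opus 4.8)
The plan is to establish both directions directly from the description of $\Q_f$ as the convex hull of the binary epigraph points, using the elementary fact that a linear inequality is valid over a convex hull exactly when it holds at every generating point and is satisfied along every recession direction.

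For the necessity direction, I would assume $\pi x \le z$ is valid for $\Q_f$ and evaluate it at the points $(\ind{S}, f(S))$ for each $S \subseteq N$. Since $f(\ind{S}) = f(S) \le f(S)$, each such point lies in $\set{(x,z) \in \set{0,1}^N \times \RR : f(x) \le z}$ and hence in $\Q_f$, so validity forces $\pi(S) = \pi \ind{S} \le f(S)$. As $S$ ranges over all subsets of $N$, this is precisely the statement $\pi \in \P_f$.

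For the sufficiency direction, I would assume $\pi \in \P_f$ and verify the inequality on a generating system of $\Q_f$. The epigraph has the single recession direction $(\0, 1)$, on which $\pi \cdot \0 = 0 \le 1$ holds trivially. For a generating point $(x, z)$ with $x \in \set{0,1}^N$ and $z \ge f(x)$, set $S = \sup{x}$, so that $x = \ind{S}$ and $\pi x = \pi(S)$; then $\pi \in \P_f$ gives $\pi(S) \le f(S) = f(x) \le z$. Hence $\pi x \le z$ holds at every generator and along the recession ray, so it is valid for $\Q_f$.

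I do not anticipate a genuine obstacle here: both implications are short consequences of the definitions. The only point that requires a little care is that $\Q_f$ is unbounded, so in the sufficiency argument one must treat the recession ray $(\0, 1)$ in addition to the generating points — but this is immediate.
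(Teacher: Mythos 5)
Your proposal is correct and follows essentially the same route as the paper's own (omitted) argument: the sufficiency direction is the chain $\pi x = \pi(\sup{x}) \le f(\sup{x}) \le z$ checked on the generators of the convex hull, and the necessity direction is the evaluation of the inequality at the points $(\ind{S}, f(S))$, which the paper phrases contrapositively. Your explicit treatment of the recession ray $(\0,1)$ is a small but welcome extra precision that the paper leaves implicit.
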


\ignore{
\begin{proof}
	\REV{Let $x \in \set{0,1}^N.$}
For $\pi \in  \P_f$, we have $\pi x = \pi(\sup{x}) \le f(\sup{x}) \le z$. Conversely, if
$\pi  \not \in \P_f$, then $\pi(S) > f(S)$ for some $S \subseteq N$; but then
for $z = f(S)$, $\pi(S) = \pi \ind{S} > z$, contradicting the validity of $\pi x \le z$.
\end{proof}
}

We refer to inequalities of Proposition~\ref{prop:polar} as the
\textit{polar inequalities}.
By Proposition~\ref{prop:polar}, we have $\Q_f \subseteq \polarP$. 
Indeed, each facet of $\polarP$ is a facet of $\Q_f$ as well, as shown below.
%Let $\ext(\P_f)$ be the set of extreme points of $\P_f$. 

\begin{proposition} \label{prop:facet}
Inequality $\pi x \le z$ is facet-defining for $\Q_f$ if and only if $\pi$ is an extreme point of $P_f$.
\end{proposition}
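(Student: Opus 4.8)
The plan is to prove the two directions of the equivalence separately, leaning on Proposition~\ref{prop:polar} throughout. Recall that $\polarP = \set{(x,z): \pi x \le z \ \forall \pi \in \P_f}$ and that, by Proposition~\ref{prop:polar}, the homogeneous valid inequalities of $\Q_f$ are exactly those $\pi x \le z$ with $\pi \in \P_f$. So the claim is really a statement about which points of $\P_f$ give facets: the extreme points, and only those.

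For the ``if'' direction, suppose $\pi$ is an extreme point of $\P_f$. First I would check that $\P_f$ is pointed (it contains no line), so that it actually has extreme points: this follows because $\P_f \subseteq \set{\pi : \pi(\set{i}) \le f(\set{i}) \ \forall i}$ combined with $\pi(N) \le f(N)$ bounds $\P_f$ in every coordinate direction from one side — actually, more carefully, $\pi_i \le f(\set i)$ and $\pi_i = \pi(N) - \pi(N\setminus\set i) \ge f(N) - f(N\setminus\set i)\cdot(-1)$... I would instead argue pointedness directly from the defining system having a subsystem of $n$ linearly independent tight constraints at $\pi$. Given that $\pi$ is extreme, there exist $S_1,\dots,S_n \subseteq N$ with $\pi(S_j) = f(S_j)$ and $\set{\ind{S_j}}$ linearly independent. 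Now I want to exhibit $\dim(N)$ affinely independent points of $\Q_f$ on the hyperplane $\pi x = z$. The natural candidates are $(\ind{S_j}, f(S_j))$ for $j = 1,\dots,n$ — these lie in $\Q_f$ and satisfy $\pi \ind{S_j} = \pi(S_j) = f(S_j)$, so they are on the face — together with the recession ray $(\0,1)$ shifted, i.e. a point like $(\ind{S_1}, f(S_1)+0)$... no: I need one more point, and $(\0,0)$ works since $f(\emptyset) = 0$ gives $\pi \cdot \0 = 0 = z$, so $(\0,0)$ is on the face too, provided $\emptyset$ is not already among the $S_j$ (if it is, replace it, since $\ind{\emptyset}=\0$ could not be part of a linearly independent set anyway). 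So I get the $n$ points $(\ind{S_j},f(S_j))$ plus $(\0,0)$, and I must argue these $n+1$ points in $\RR^{N}\times\RR$ are affinely independent; this reduces to the linear independence of the $\ind{S_j}$ (the $x$-coordinates of the differences from $(\0,0)$ are exactly $\ind{S_1},\dots,\ind{S_n}$). Hence the face has dimension $\ge n$, and since $\pi x \le z$ is not the whole space (it is a valid, non-trivial inequality), it is facet-defining. The point needing care here is handling degeneracies — whether $\emptyset$ appears among the tight sets — but this is a minor bookkeeping issue.

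For the ``only if'' direction, suppose $\pi x \le z$ is facet-defining for $\Q_f$. By Proposition~\ref{prop:polar}, $\pi \in \P_f$. Suppose for contradiction that $\pi$ is not an extreme point, so $\pi = \tfrac12(\pi' + \pi'')$ with $\pi', \pi'' \in \P_f$ distinct. Then $\pi' x \le z$ and $\pi'' x \le z$ are both valid for $\Q_f$, and their average is $\pi x \le z$; hence any point of $\Q_f$ tight for $\pi x \le z$ is tight for both $\pi' x \le z$ and $\pi'' x \le z$. A facet is determined (up to scaling and adding multiples of trivial/other inequalities) by its affine hull; since $\pi' \ne \pi''$, the inequalities $\pi' x\le z$ and $\pi''x\le z$ define distinct (non-parallel in the relevant sense) supporting hyperplanes, so the set of tight points lies in an affine subspace of dimension $< n$, contradicting that it is a facet. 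I would phrase this cleanly: the facet-defining inequality is unique up to positive scaling by Proposition~\ref{prop:facet0} (the coefficient of $z$ is pinned to $1$), so $\pi' x \le z$ being also valid and also tight on the whole facet forces $\pi' = \pi$, and likewise $\pi'' = \pi$, a contradiction.

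The main obstacle I anticipate is the ``if'' direction: producing the right count of affinely independent feasible points on the face and correctly handling the empty-set/degeneracy case, as well as confirming that the candidate inequality is genuinely non-trivial (not dominated by $\0 \le x \le \1$) so that Proposition~\ref{prop:facet0}'s dichotomy applies. The ``only if'' direction is essentially a soft uniqueness argument and should be short.
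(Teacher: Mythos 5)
Your proposal is correct and follows essentially the same route as the paper: the ``only if'' direction via writing a non-extreme $\pi$ as a convex combination of two distinct points of $\P_f$ whose inequalities imply $\pi x \le z$, and the ``if'' direction by exhibiting the $n$ linearly independent points $(\ind{S_i}, f(S_i))$ from the tight constraints together with $(\0,0)$, and checking properness via $(\0,1)$. The extra care you take over pointedness of $\P_f$ and the possible appearance of $\emptyset$ among the tight sets is harmless bookkeeping that the paper omits.
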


\begin{proof}
From Proposition~\ref{prop:polar} if $\pi \not \in \P_f$, inequality $\pi x \le z$ is invalid for $\Q_f$.
If $\pi \in \P_f$ is not an extreme point,
then $\pi = \lambda \pi^1 + (1-\lambda) \pi^2$ for some $0 < \lambda < 1$ and  distinct $\pi^1, \pi^2 \in \P_f$ and $\pi x \le z$
is implied by $\pi^1 x \le z$ and $\pi^2 x \le z$. Conversely, if $\pi$ is an extreme point of $P_f$,
it is the unique solution to a set of $n$ linearly independent equations $\pi(S_i) = f(S_i)$ for $i=1, \ldots, n$.
Then, the corresponding linearly independent points $(\ind{S_i}, f(S_i))$, $i=1, \ldots, n$ of $\Q_f$ and $(\0, 0)$
are on the face $\set{x \in \Q_f: \pi x = z}$. %\todo{uses $f(\emptyset) = 0$}
Finally, as $(\0, 1) \in \Q_f$ but not on the face, the face is proper.
\end{proof}

The polar relaxation $\polarP$ gives a (convex) polyhedral outer approximation for $\Q_f$. It indeed gives all nontrivial homogeneous facets of $\Q_f$.
 It is interesting to contrast it with the Lov\'asz extension \citep{L:convex-ext} for a set function.

\begin{definition}
	For a set function $f$, the \emph{Lov\'asz extension} $\hat f: [0,1]^N \rightarrow \RR$
	is defined as
	\[
	\hat f(x): = \sum_{i=1}^{n-1} (x_i-x_{i+1}) f(S_i) + x_n f(S_n),
	\]
	where $1 \ge x_1 \ge \ldots \ge x_n \ge 0$ and
	$S_i = \{1, 2, \ldots, i\}, \ i \in N =: \set{1, \ldots, n}$.  
\end{definition}
Observe that $\hat f$ is homogeneous and $\hat f(\chi_S) = f(S), \ \forall S \subseteq N$. It is easy to see that for a general set function, 
Lov\'asz extension is a piecewise polyhedral \textit{inner} approximation of $\Q_f$. Thus, we have
\[
\boxed{ \ \ \rule[-3mm]{0mm}{8mm}\textbf{epi } \hat f \ \subseteq \ \Q_f \ \subseteq \ \polarP \cap [0,1]^n \times \RR. \ \ }
\]

\begin{example} \label{ex:polar-lovas}
	In this example, we compare the polar outer approximation with the Lov\'asz extension for a non-submodular set function. 
	Consider
	the function $f$ defined as $f(\emptyset) = \ \ 0$ (A),
	$f(\set{1})=-1$  (B),
	$f(\set{2})=-1$ (C),
	$f(\set{1,2}) =-1$ (D). Function $f$ is, in fact, supermodular.
	\ignore{
		\begin{tabular}{lr}
			$f(\emptyset) $ & $= \ \ 0$ (A) \\
			$f(\set{1}) $ & $=-1$  (B) \\
			$f(\set{2}) $ & $=-1$ (C) \\
			$f(\set{1,2}) $ & $=-1$ (D)
		\end{tabular}
	}

\begin{multicols}{2}
	The inequalities describing \textbf{epi} $\hat f$, 	$\Q_f$, and $\polarP $,
	 other than the bounds $\0 \le x \le \1$, are listed below and displayed in Figure~1. \\
	
	\begin{tabular}{rl} 
		\textbf{epi} $\hat f$  : & $\left \{ \
\text{
	\begin{tabular}{lr}
	ABD: \ & $-x_1 \le z \ \text{ if }  x_1 \ge x_2$ \\
	ACD: \ & $-x_2 \le z \ \text{ if }  x_1 \le x_2$
	\end{tabular} 
}
\right .$ \vspace{2mm} \\ 
	$\Q_f$ : & $\left \{
	\text{
		\begin{tabular}{lr}
	\	ABC: \  & $-x_1-x_2 \le z$  \\
	\	BCD: \  & $-1 \le z$
		\end{tabular} 
	}
	\right .$ \vspace{2mm}
	\\ 
	$\polarP$ : &  $ \bigg \{
	\text{
		\begin{tabular}{lr}
	\	ABC: \  & $-x_1-x_2 \le z$  \\
		\end{tabular} 
	}
	$
	\end{tabular}

\columnbreak

    \begin{minipage}{\linewidth}
	\centering
%	\label{fig:sets}
		%\includegraphics[scale=0.085,angle=0]{ch-le.jpg}
		\begin{tikzpicture}[scale=1.4]
  % draw the cube first
  \draw[thick](2,2,0)--(0,2,0)--(0,2,2)--(2,2,2)--(2,2,0)--(2,0,0)--(2,0,2)--(0,0,2)--(0,2,2);
  \draw[thick](2,2,2)--(2,0,2);
  \draw(2,0,0)--(0,0,0)--(0,2,0);
  \draw(0,0,0)--(0,0,2);
  % labels
  \draw(0,-0.1,2.2) node{C};
  \draw(0,2.2,0) node{A};
  \draw(2.2,0,0) node{B};
  \draw(2.2,0,2.2) node{D};
  % axes
  \draw(1, 2.2,0) node{ $x_1$ };
  \draw(0,2.25,1.5) node{ $x_2$ };
  % for epigraphs
  \draw[dashed,gray](0,2,0)--(2,0,2);
  \draw[dashed,gray](0,2,0)--(2,0,0);
  \draw[dashed,gray](0,2,0)--(0,0,2);
  \draw[dashed,gray](0,0,2)--(2,0,0);
\end{tikzpicture}
	\captionof{figure}{	\textbf{epi} $\hat f$, $\Q_f$, $\polarP$.}
	\label{fig:cc}
\end{minipage}
\end{multicols}		
\noindent
Observe that 	\textbf{epi} $\hat f$ is non-convex and 
	\textbf{epi} $\hat f \subsetneq \Q_f \subsetneq \polarP \cap [0,1]^n \times \RR$.
	
\end{example}

\subsection*{The submodular case}

\citet{L:convex-ext} has shown that $\hat f$ is convex if and only if $f$ is submodular, establishing the relationship between convexity and submodular set functions. Next we show that for a submodular function $f$, all nontrivial facets of $\Q_f$ are homogeneous; consequently, the polar $\polarP$ gives an exact relaxation. So, for a submodular function $f$, it holds 
\[
\boxed{ \ \ \rule[-3mm]{0mm}{8mm}\textbf{epi } \hat f \ = \ \Q_f \ = \ \polarP \cap [0,1]^n \times \RR. \ \ }
\]

\begin{proposition} \label{prop:submodular}
For a submodular function $f$,
any non-trivial facet-defining inequality
\begin{equation}
    \pi x \leq z + \pi_0 \label{eq:positiveRHS}
\end{equation}
of $\Q_f$ satisfies \REV{$\pi_0 = 0$.}
\end{proposition}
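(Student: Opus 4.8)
The plan is to prove this by contradiction, playing the submodularity of $f$ against the dimension of the facet. By Proposition~\ref{prop:facet0} we may assume the inequality is already in the normalized form $\pi x \le z + \pi_0$ with $\pi_0 \ge 0$, so it suffices to rule out $\pi_0 > 0$; assume this from now on. First I would record the shape of $\Q_f$: over each $\ind{S}$ the epigraph is the ray $[f(S),\infty)$ in the $z$-direction, so $\Q_f = \conv\{(\ind{S}, f(S)) : S \subseteq N\} + \RR_{\ge 0}(\0,1)$, and $\Q_f$ is full-dimensional (it contains $(\0,0)$, $(\0,1)$ and $(e_i, f(\{i\}))$ for $i \in N$, which are affinely independent). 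Validity of $\pi x \le z + \pi_0$ at the points $(\ind{S}, f(S))$ reads $\pi(S) \le f(S) + \pi_0$ for all $S \subseteq N$; let $\mathcal{T} := \{S \subseteq N : \pi(S) = f(S) + \pi_0\}$ be the family of tight sets. The ray $(\0, 1)$ strictly decreases $\pi x - z$, so it is not a recession direction of the face $F$ defined by the inequality; hence $F$ is bounded and equals $\conv\{(\ind{S}, f(S)) : S \in \mathcal{T}\}$. On $\mathrm{aff}(F)$ one has $z = \pi x - \pi_0$, so the projection to the $x$-coordinates is an injective affine map there; consequently, if $F$ is a facet (hence $\dim F = |N|$), then $\{\ind{S} : S \in \mathcal{T}\}$ affinely spans $\RR^N$. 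In particular $\mathcal{T} \ne \emptyset$.

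Next I would use submodularity to show that $\mathcal{T}$ is closed under union and intersection. For $S, T \in \mathcal{T}$, modularity of $\pi$ together with $S, T \in \mathcal{T}$ gives $\pi(S \cup T) + \pi(S \cap T) = \pi(S) + \pi(T) = f(S) + f(T) + 2\pi_0$, which by submodularity of $f$ is at least $f(S \cup T) + f(S \cap T) + 2\pi_0$; on the other hand, adding the two valid inequalities $\pi(S \cup T) \le f(S \cup T) + \pi_0$ and $\pi(S \cap T) \le f(S \cap T) + \pi_0$ gives the reverse bound $\pi(S \cup T) + \pi(S \cap T) \le f(S \cup T) + f(S \cap T) + 2\pi_0$. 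Hence all of these hold with equality, and in particular both $\pi(S \cup T) = f(S \cup T) + \pi_0$ and $\pi(S \cap T) = f(S \cap T) + \pi_0$, i.e. $S \cup T, S \cap T \in \mathcal{T}$. Since $\mathcal{T}$ is a nonempty finite family closed under intersection, $M := \bigcap_{S \in \mathcal{T}} S$ lies in $\mathcal{T}$; and $M \ne \emptyset$ because $\pi(\emptyset) = 0 \ne \pi_0 = f(\emptyset) + \pi_0$ shows $\emptyset \notin \mathcal{T}$. Fixing any $i \in M$, every $S \in \mathcal{T}$ contains $i$, so $\{\ind{S} : S \in \mathcal{T}\}$ lies in the proper affine hyperplane $\{x \in \RR^N : x_i = 1\}$, contradicting the affine spanning established above (we may assume $N \ne \emptyset$, since otherwise there is nothing to prove). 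Therefore $\pi_0 = 0$.

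The step I expect to be the main obstacle is the bookkeeping in the first paragraph: cleanly passing from ``$\pi x \le z + \pi_0$ is facet-defining'' to the purely combinatorial statement ``$\{\ind{S} : S \in \mathcal{T}\}$ affinely spans $\RR^N$'', via boundedness of $F$ and injectivity of the projection to $x$-space on $\mathrm{aff}(F)$. The rest is a one-line submodularity computation together with the elementary observation that an intersection-closed set family avoiding $\emptyset$ has a nonempty common core. As an alternative to the core argument, one could instead extract from the $|N|+1$ affinely independent tight points two sets $S, T \in \mathcal{T}$ whose intersection is strictly smaller than $S$, and iterate the $\cup/\cap$-closure downward until $\emptyset \in \mathcal{T}$, again a contradiction; the common-core formulation is simply shorter.
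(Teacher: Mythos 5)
Your proof is correct, but it takes a genuinely different route from the paper's. The paper also argues by contradiction, but instead of analyzing the tight-set family it shifts the function: assuming $\pi_0 > 0$, it defines $f'(\emptyset)=0$ and $f'(S)=f(S)+\pi_0$ for $\emptyset \neq S \subseteq N$ (still submodular because $\pi_0 \ge 0$), observes that validity of \eqref{eq:positiveRHS} puts $\pi \in \P_{f'}$ and that $\pi x \le z$ must then be facet-defining for $\Q_{f'}$, invokes Edmonds' characterization of the extreme points of the extended polymatroid $\P_{f'}$ to write $\pi$ in greedy form ($\pi_1 = f(S_1)+\pi_0$ and $\pi_i = f(S_i)-f(S_{i-1})$ for $i \ge 2$ after permutation), and then exhibits $\gamma = \pi - \pi_0 e_1 \in \P_f$ so that $\gamma x \le z$ together with $x_1 \le 1$ dominates \eqref{eq:positiveRHS}, contradicting that it is a facet. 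Your argument instead works entirely inside $\Q_f$: you show that the tight-set family $\mathcal{T}$ of a facet with $\pi_0>0$ is closed under union and intersection (the standard uncrossing computation, which is exactly where submodularity enters), that it excludes $\emptyset$, and hence has a nonempty common core $M$, forcing every tight vertex into a hyperplane $x_i=1$ and contradicting the affine spanning required of a facet. What the paper's route buys is brevity given the machinery already in place (Propositions~\ref{prop:polar} and~\ref{prop:facet} plus Edmonds' greedy description of extreme points); what yours buys is self-containedness, since it needs neither the polarity propositions nor Edmonds' theorem, only the elementary facial structure of $\Q_f$, and it makes transparent exactly where submodularity is used. The dimension bookkeeping in your first paragraph (full-dimensionality of $\Q_f$, boundedness of the face because the recession ray $(\0,1)$ is not tight, and injectivity of the projection onto $x$-space along $z=\pi x - \pi_0$) is all sound, as is the handling of the degenerate case $N=\emptyset$.
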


\begin{proof}
\REV{Recall that $\pi_0 \ge 0$ by Proposition~\ref{prop:facet0}. 
	Therefore, it suffices to prove that $\pi \le 0$.}
For contradiction, suppose $\pi_0 > 0$.  Consider $f':2^N \func \R$ defined as $f'(\emptyset) = 0$ and $f'(S) = f(S) + \pi_0$ for $ \emptyset \neq S \subseteq N$. 
Observe that $f'$ is submodular as well.
Since~\eqref{eq:positiveRHS} is valid for $\Q_f$, $\pi(S) \leq f(S) + \pi_0$ for all $S \subseteq N$ and hence, $\pi \in \P_{f'}$ and,  by Proposition~2, inequality
\begin{equation}
    \pi x \leq z\label{eq:zeroRHS} 
\end{equation}
is valid for $\Q_{f'}$. Now,
since~\eqref{eq:positiveRHS} is facet-defining for $\Q_f$, 
~\eqref{eq:zeroRHS} is facet-defining for $\Q_{f'}$. Hence, by Proposition~3,
$\pi$ is an extreme point of \REV{polymatroid} $P_{f'}$. \REV{Due to \citet{E:submodular},}
after permuting variables, if necessary, we may assume that $\pi_1 = f'(S_1) = f(S_1) + \pi_0$, and $\pi_i = f'(S_i) - f'(S_{i-1}) = 
f(S_i) - f(S_{i-1})$ for $i =2,\dotsc, n$. 
Then, $\gamma \in \P_f$ for $\gamma_1 = \pi_1 - \pi_0$,
and $\gamma_i = \pi_i$ for all $i = 2,\dotsc, n$. However, by Proposition~2, $\gamma x \leq z$ is valid for $\Q_f$ and, together with $x_1 \le 1$, it dominates inequality~\eqref{eq:positiveRHS}.
\end{proof}

\begin{remark}
		Note that if $f(\emptyset) \neq 0$, in order to define the inequalities for $\Q_f$,
		we may use the polyhedron $\P_{f'}$ for $f' := f - f(\emptyset)$.
		In general, polar inequalities for $\Q_f$ take the form
		\be
		\pi x \le z - f(\emptyset), \ \pi \in \P_{f - f(\emptyset)}.
		\ee
		\ignore{as for $f'$ we have $\pi_i = f(S_{(i)}) - f(S_{(i-1)}) = f'(S_{(i)}) - f'(S_{(i-1)})$
			for any ordering of $N$. Hence, extended polymatroid inequalities are
			affine if and only if $f(\emptyset) \neq 0$.}
\end{remark}

\begin{remark}
For a submodular function,  $\P_f$ is an extended polymatroid and 
the polar inequalities reduce to the extended polymatroid inequalities.
The separation problem for extended polymatroid inequalities is
optimization of a linear objective over $\P_f$, which can be solved by
the greedy algorithm of Edmonds \cite{E:submodular}: Given $\bar x \in \RR_+^{n}$ and $\bar z \in \RR$, checking
\REV{whether $(\bar x, \bar z)$ violates an extended polymatroid inequality} is equivalent to
solving the problem
\be
\zeta := \text{max} \big \{\pi \bar x : \pi \in \P_f \big \}.
\ee
For a nonincreasing order
$\bar x_{(1)} \ge \bar x_{(2)} \ge \cdots \ge \bar x_{(n)}$, let
$S_{(i)} = \set{(1), (2), \ldots, (i)}$ and $\bar \pi_{(i)} = f(S_{(i)}) - f(S_{(i-1)})$
for $1 \le i \le n$. Then, $(\bar x, \bar z)$ is violated by
the corresponding extended polymatroid inequality if and only if $\zeta = \bar \pi \bar x > \bar z$.
\end{remark}

\REV{
\begin{remark}
\citet{AN:conicobj} study the epigraph of the risk function
$f(x) = a'x + h(c'x)$, where $a \in \RR^n, \ c \in \RR_+^n$ and $h$ is a univariate increasing concave function.
The convex hull proof in there is specific to $f$ and does not apply to a general submodular function.
\end{remark}
}

\REVTWO{
\begin{remark} Proposition~\ref{prop:submodular} shows that submodularity of $f$ implies $\Q_f = \polarP \cap [0,1]^n \times \RR$,
	which is true
%	First, observe that $\Q_f = \polarP \cap [0,1]^n \times \RR$ 
	if and only if 
\begin{equation}
f(S) =  \Max{ \pi(S) \, : \, \pi \in \P_f} , \, \forall S \subseteq N. \label{eq:necessary}
\end{equation}
%as the extreme points of $\Q_f$ are $(\chi_S,f(S))$ for all $S \subset N$.
Note that \eqref{eq:necessary} does not hold for the non-submodular function $f$ in Example~\ref{ex:polar-lovas}: 
\[
f(\{1,2\}) = -1 > -2 = \Max{\pi_1 + \pi_2: \pi \in \P_f}.
\]
However, $\Q_f = \polarP \cap [0,1]^n \times \RR$ may hold also for non-submodular functions as Example~\ref{ex:nonsub} below illustrates.
Hence, the polar approach may be effective for non-submodular functions as well.
\end{remark}

\begin{example}\label{ex:nonsub}
For $N = \{1,2,3\}$ let $f \colon 2^N \to \RR$ be defined as $f(\emptyset) = 0$, $f(\{i\}) = 1$ for $i \in N$, $f(\{i,j\}) = 1.9$ for distinct $i,j\in N$, and $f(N) = 2.85$. %It is easy to check that for any disjoint $S, T \subseteq N$, we have $f(S) + f(T) \geq f(S \cup T)$. However, 
Observe that $f$ is not submodular as 
\[
f(\{1,2\}) + f(\{1,3\}) = 3.8 < 3.85 = f(\{1,2,3\}) + f(\{1\})  \ .
\]
However, \eqref{eq:necessary} holds for this function: Note $\P_f = \{ \pi \in \RR^3 \ : \ \pi_i \leq 1 \text{ for } i \in N, \ \pi_i + \pi_j \leq 1.9 \text{ for distinct } i, j \in N, \ \pi_1 + \pi_2  + \pi_3 \leq 2.85 \}$.  Table~\ref{tab:optnonsub} lists the optimal solutions $\pi^*(S)$ and the optimal objective values for the linear program $g(S) := \Max{ \pi(S) \ : \ \pi \in \P_f}$ for all $S \subseteq N$.
\begin{table}[h!]
    \centering
%    \color{red}
    \caption{Verifying \eqref{eq:necessary} for Example~\ref{ex:nonsub}}.
    \label{tab:optnonsub}
%    \color{red}
    \begin{tabular}{c|c|c|c}
    \hline \hline
    $S$ & $f(S)$ & $\pi^*(S)$ & $g(S)$ \\
    \hline
    $\emptyset$ & 0 & $\pi^*_1 = \pi^*_2 = \pi^*_3 = 0$ & 0 \\
    $\{i\}, \ i \in N$ &1 & $\pi^*_i = 1$,  $\pi^*_j= 0$ for $j \in N \setminus \{i\}$ & 1 \\
    $\{i,j\}$, $i,j \in N$, $i \neq j$ & 1.9 & $\pi^*_i = 1, \pi^*_j =0.9$     & 1.9\\
    $\{1,2,3\}$ & 2.85 & $\pi^*_1 = \pi^*_2 = \pi^*_3 = 0.95$ & 2.85\\
    \hline \hline
    \end{tabular}
\end{table}
\end{example}
}

\ignore{
\subsection{Necessary conditions for $\Q_f = \polarP \cap [0,1]^n \times \RR$}
First, observe that $\Q_f = \polarP \cap [0,1]^n \times \RR$ if and only if 
\begin{equation}
	f(S) =  \Min{z \ : \  z \geq \pi^{\top}x, \pi \in \P_f, \ x \in [0,1]^n } =  \Max{ \pi(S) \ : \ \pi \in \P_f} , \ S \subseteq N , \label{eq:necessary}
\end{equation}
as the extreme points of $\Q_f$ are $(\chi_S,f(S))$ for all $S \subset N$.
d
Consider subsets $S, T \subseteq N$ with $S \cap T = \emptyset$. Then from~\eqref{eq:necessary}, 
\[
f(S \cup T) = \Max{ \pi(S) + \pi(T) \ : \ \pi \in \P_f } \leq f(S) + f(T) \ ,
\]
which is the submodularity condition when $S$ and $T$ are disjoint. However, it is not true in general that~\eqref{eq:necessary} implies that $f$ is submodular, as Example~\ref{ex:nonsub} below shows.

}

\section{Inequalities for general set functions}
\label{sec:general}

Unlike the submodular case, homogeneous inequalities of the polar relaxation are not 
sufficient to describe $\Q_f$ for a general set function.
One can, however, generate non-homogeneous inequalities by decomposing a set function into a sum of 
a submodular function and a supermodular function and utilizing inequalities for each.

It is well-known that for any set function $f$, such a decomposition exists. For a strictly submodular $h$ and
sufficiently large $\lambda > 0$, the first term below is submodular, whereas the second term is supermodular:
\begin{align*}
f =  \big (f + \lambda h \big ) + \big (-\lambda  h \big) \cdot
\end{align*}
Although, in general, a submodular--supermodular decomposition may be difficult to compute, in many cases, such a decomposition 
is either readily available or easy to construct.
Below we give a few examples that will also be used in the computations in Section~\ref{sec:comp}: \vskip 2mm
\begin{itemize}
	\setlength{\itemsep}{2mm}
	
	\item \textit{Optimization with higher moments}: When random variables deviate significantly from the normal distribution,
	optimization of utility functions with higher moments, such as skewness and kurtosis, in addition to the expectation and standard deviation, is preferred for more accurate modeling:  
	\[ \quad \quad 
	-\mu' x + \lambda_2 \left (\sum \sigma_i^2 x_i^2 \right )^{1/2} 
	- \lambda_3 \left (\sum \gamma_i^3 x_i^3 \right )^{1/3} 
	+ \lambda_4 \left (\sum \kappa_i^4 x_i^4 \right )^{1/4}
	\]
	Here, the first term is modular, the second and fourth (even) terms submodular, and the third (odd) term is supermodular. 
	
	\item \textit{Quadratic optimization on binaries}: Let $Q$ be a square matrix and $Q^-$ and $Q^+$ be the square matrices with the negative and positive elements of $Q$, respectively, and zeros elsewhere. Since $g(x) = x'Q^-x$ is submodular and $-h(x) = x'Q^+x$ is supermodular \cite{NW:IPbook},
	we have the corresponding submodular--supermodular decomposition: 
	\[ f(x) =	x'Qx = x'Q^-x + x'Q^+ x. \]

	\item \textit{Fractional linear functions}: Although not as immediate as in the cases above, 
	a submodular-supermodular decomposition can be
	constructed for a fractional linear function with positive coefficients $a,c >0$  as follows:
	\[
	f (x) := \frac{c'x}{1 + a'x} = \bigg (f(x) + \lambda \frac{a'x}{1 + a'x} \bigg ) - \bigg ( \lambda \frac{a'x}{1 + a'x} \bigg ) \cdot
	\]
	Observe that $h(x) = a'x/(1+a'x)$ is submodular as it is the composite of the univariate function $x/(1+x)$, which is concave over nonnegative values and the modular function $a'x$ \cite{AA:max-util}.
	\citet{HGP:assortment} show that $f(x)$ is submodular if $f(N) \le r_{min} := \min_{i \in N} c_i/a_i$. 
	Then
	letting $\lambda \ge \lambda_{min} := \big (c(N) - {r}_{min} (1+a(N))\big )^+$
	ensures that the first term $f(x)+\lambda h(x)$ in the decomposition is always submodular: 
	Observe that if all ratios $c_i/a_i$ are equal, then
	$f(x)$ is submodular and $\lambda_{min} = 0$ is sufficient. Otherwise, $\lambda = \lambda_{min}$ implies 
	$f(N) + \lambda_{min} h(N) \le r_{min} + \lambda_{min}$. \\

\end{itemize}
%Other examples arising in machine learning are described in \cite
%Difference of submodular functions arise in numerous machine learning problems as well \citep{bilmes}.

Writing a general set function as the difference of two submodular functions, $f=g-h$, 
an outer approximation for $f$ can be formed by using the polar inequalities for $g$ and the
\textit{submodular inequalities} of Nemhauser and Wolsey \cite[pg 710]{NW:IPbook} for $h$:
\be \label{eq:nw1} w \le h(S) -
\sum_{i \in S} \rho_i(N \sm i) (1-x_i) + \sum_{i \in N \sm S}
\rho_i(S) x_i \ \text{ for all } S \subseteq N, \ee 
\be
\label{eq:nw2} w \le h(S) - \sum_{i \in S} \rho_i(S \sm i) (1-x_i) +
\sum_{i \in N \sm S} \rho_i(\emptyset) x_i \ \text{ for all } S
\subseteq N, \ee
where $ \rho_i(S) = h(S \cup \set{i}) - h(S)$. Submodular inequalities 
\eqref{eq:nw1}--\eqref{eq:nw2} are valid for the \textit{hypograph} of $h$,
\textbf{hyp} $h := \set{(x,w) \in \set{0,1}^N \times \RR:  h(x) \ge w}\cdot$

\section{Computations} \label{sec:comp}

In this section we report on our computational experiments with using the
inequalities from submodular--supermodular decompositions for non-submodular functions. 
In each experiment, we control the deviation 
of the set functions from submodularity. All computations are done with Gurobi version 9.0 with default solver
options (except heuristics and presolve are turned off and single thread is used) on a Xeon workstation.

The first set of experiments are on binary quadratic optimization of the form: 
$\min \big \{x'Qx + \Omega c'x: x \in \set{0,1}^n \big \}$. 
The data is generated following \citet{carter-qp} : $Q_{ij}$ is drawn from Uniform$[-100\lambda,100(1-\lambda)]$ for $i \neq j$ and $Q_{ii} = 0$;
$c_i$ is drawn from Uniform$[-100(1-\lambda)(n-1),100\lambda(n-1)]$. 
The parameter $\lambda \in [0,1]$ controls the distance from submodularity:
for $\lambda=1$, $Q$ is nonpositive and the quadratic function is submodular, whereas
for $\lambda=0$, $Q$ is nonnegative and the quadratic function is supermodular. 
Thus, the quadratic function is a convex combination of a submodular function and 
a supermodular function. %: $x'Qx = \lambda x'Q^-x + (1-\lambda) x'Q^+x$. 
The parameter
$\Omega$ is chosen to ensure that quadratic and linear terms are well-balanced to avoid trivial solutions.
Gurobi 9.0 uses McCormick inequalities to automatically build convex relaxations of nonconvex quadratic functions.
%As $\lambda$ decreases the quadratic function moves from submodular toward supermodular.

\begin{table}[t]
	\caption{Binary quadratic optimization.}
	\label{tab:quad}
	\small
	\setlength{\tabcolsep}{6pt}
\begin{tabular}{c|rrrrrrrrrrr}
	\hline \hline
	%n = 200
	$\lambda$	&	0.0	&	0.2	&	0.4	&	0.6	&	0.8	&	1.0	\\ \hline	
	gap (\%)	&	0.1	&	0.8	&	1.4	&	7.1	&	35.2	&	40.6	\\	
	cgap (\%)	&	0.1	&	0.6	&	0.2	&	0.1	&	0.1	&	0.0	\\	
	time (sec.)	&	0.2	&	2.4	&	2.7	&	2.4	&	5.6	&	8.6	\\	
	ctime (sec.)	&	0.2	&	1.8	&	3.1	&	1.4	&	3.0	&	0.0	\\	
	\# nodes	&	286.2	&	258.0	&	259.0	&	258.0	&	258.2	&	257.0	\\	
	\# cnodes	&	286.2	&	360.0	&	348.2	&	210.2	&	207.2	&	0.2	\\	
	\# cuts	&	0.0	&	1.6	&	4.0	&	6.8	&	13.0	&	2.0	\\ \hline \hline			
\end{tabular}
\end{table}
	
\ignore{	
%n = 100
$\lambda$	&	0.0	&	0.2	&	0.4	&	0.6	&	0.8	&	1.0	\\ \hline
gap (\%)	&	0.3	&	0.5	&	2.3	&	7.3	&	37.9	&	41.6	\\
cgap (\%)	&	0.3	&	0.3	&	0.3	&	0.2	&	0.3	&	0.0	\\
time (sec.)	&	1.0	&	0.5	&	0.5	&	0.4	&	0.5	&	0.6	\\
ctime (sec.)	&	0.8	&	0.3	&	0.4	&	0.2	&	1.0	&	0.0	\\
\# nodes	&	480.9	&	483.1	&	295.3	&	283.0	&	258.0	&	257.0	\\
\# cnode	&	480.9	&	541.8	&	318.9	&	89.9	&	106.2	&	0.0	\\
\# cuts	&	0.0	&	1.7	&	2.8	&	4.0	&	16.8	&	3.0	\\ \hline \hline

%n=50 
$\lambda$	&	0.0	&	0.2	&	0.4	&	0.6	&	0.8	&	1.0	\\ \hline
gap	(\%) &	0.7	&	1.3	&	1.8	&	20.5	&	19.0	&	44.3	\\
cgap (\%)	&	0.7	&	0.9	&	0.2	&	2.9	&	0.1	&	0.0	\\
time (sec.)	&	0.2	&	0.2	&	0.1	&	0.2	&	0.1	&	0.1	\\
ctime (sec.)	&	0.2	&	0.2	&	0.0	&	0.2	&	0.0	&	0.0	\\
\#nodes	&	797.4	&	881.2	&	297.6	&	258.6	&	257.0	&	257.0	\\
\# cnodes	&	797.4	&	641.4	&	140.0	&	156.8	&	30.4	&	5.0	\\
\# cuts	&	0.0	&	1.4	&	2.2	&	10.2	&	5.0	&	3.0	\\
\hline \hline
}

In Table~\ref{tab:quad} we report the integrality gap at the root node, solution time, and the number of nodes explored with and without
adding cuts. Each row shows the average for five instances with $n=200$.
We observe in Table~\ref{tab:quad} that the integrality gap of the convex relaxations increases with $\lambda$, 
achieving the highest value for the
submodular case ($\lambda=1$). The inequalities from the outer relaxations are particularly effective for these cases with high integrality gap. Indeed, for the submodular case, the gap is closed completely at the root node and the problems are solved without branching, as expected. For all values of $\lambda$ we observe a substantial reduction in the integrality gaps, leading to reduction in the number of nodes as well as the solution times.

The second set of experiments are done on the 0--1 knapsack problem with a mean-risk objective involving higher moments:  
\begin{align*}
\min & -\Omega \mu' x + \lambda  \left (\sum \sigma_i^2 x_i^2 \right )^{1/2} - (1-\lambda) \left (\sum \gamma_i^3 x_i^3 \right )^{1/3} + \lambda \left (\sum \kappa_i^4 x_i^4 \right )^{1/4}
\\ \text{s.t.} & \ a'x \le b, \ x \in \set{0,1}^n.
\end{align*}
The data is generated following \citet{BG:discrete}. The parameters $\mu_i$ and $a_i$ are drawn from Uniform[0,100], $\sigma_i$, 
$\gamma_i$, $\kappa_i$ are drawn from Uniform[0, $\mu_i$]. The knapsack capacity is set to $0.5 \sum_i a_i$.
As before, the parameter $\lambda$ controls the distance of the objective function 
from submodularity. For $\lambda = 1$ the objective is submodular; for $\lambda = 0$ it is supermodular. 
$\Omega$ is chosen to ensure that positive and negative terms in the objective are well-balanced to avoid trivial solutions.
Conic quadratic formulations of the epigraphs of (convex) standard deviation and kurtosis functions are standard \cite{NN:ConvexBook}.
We utilize the following convex formulation for the hypograph of the skewness function over binary $x$:
\[
s \le \left (\sum \gamma_i^3 x_i^3 \right )^{1/3}  \Longleftrightarrow z \le \sum_i \gamma_i^3 x_i; \ w^2 \le zs; \ s^2 \le w .
\]

The results for this experiment are summarized in Table~\ref{tab:moment}.
 Each row shows the average for five instances with $n=100$.
For this problem the integrality gap is highest for $\lambda =0.4$. Out of 30, 13 instances could not be solved to optimality within the half hour time limit without cuts. 
%Cuts are very effective in reducing the integrality gap.
%As expected, the cuts are most effective for the submodular case when $\lambda = 1$. 
Of those, ten are solved to optimality within time limit with the cuts. Utilizing the cuts reduced the root gaps substantially and, consequently, led to smaller search trees, faster solution times and better solutions.

\begin{table}[t]
		\caption{Optimization with higher moments.}
	\label{tab:moment}
	\small
	\setlength{\tabcolsep}{6pt}
	\begin{tabular}{c|rrrrrrrrrrr}
		\hline \hline
		%n=100
		$\lambda$	&	0.0	&	0.2	&	0.4	&	0.6	&	0.8	&	1.0	\\ \hline
		gap (\%)	&	0.0	&	7.5	&	50.2	&	48.3	&	37.1	&	29.9	\\
		cgap (\%)	&	0.0	&	1.2	&	25.8	&	14.8	&	2.8	&	0.0	\\
		time (sec.)	&	0.1	&	71.9	&	1,800.0	&	1,553.0	&	1,279.3	&	388.1	\\
		ctime (sec.)	&	0.1	&	1.4	&	814.6	&	448.5	&	3.3	&	0.8	\\
		\# nodes	&	2.6	&	748.6	&	13,554.6	&	20,717.0	&	18,044.0	&	10,598.8	\\
		\# cnodes	&	2.6	&	57.6	&	3,720.2	&	4,107.4	&	87.8	&	2.2	\\
		\# cuts	&	0.0	&	52.6	&	100.0	&	100.0	&	95.4	&	64.2	\\ 
		\hline \hline
	\end{tabular}
\end{table}

Finally, the last set of experiments are done on fractional linear optimization problems of the form
\[
\min \bigg \{ \frac{c'x}{1 + a'x} - \Omega s'x  : \ x \in \set{0,1}^n \bigg \} \cdot
\]
The parameters $a_i$ and $s_i$ are drawn from Uniform[0,10] and $r_i$ are drawn from Uniform$[1+\lambda,2]$. 
We let $c_i = r_i a_i$, $i=1,\ldots,n$. 
$\Omega$ is chosen to ensure that positive and negative terms are well-balanced to avoid trivial solutions.
Observe that for $\lambda=1$, $c=2a$ and the objective is submodular. Otherwise, we form the submodular-supermodular decomposition as discussed in Section~\ref{sec:general}. The convex relaxation
used for the formulation is
\begin{align*}
\min & \ z + \lambda_{min} (t-1) - \Omega s'x \\
\text{s.t.} & \ w = 1+ a'x; \ \sum_i \tilde c_i x_i^2 \le zw; \ 1 \le tw; \ x, w,z,t \ge 0, 	
\end{align*}
where $\tilde c_i = c_i +\lambda_{min}a_i$, $i=1,\ldots,n$. Here $z$ corresponds to the convex relaxation of
the submodular function $\tilde c ' x / (1+a'x)$ and $t$ corresponds to the convex complement  $1/(1+a'x)$ of $a'x/(1+a'x)$. We utilize polymatroid inequalities for $\tilde c ' x / (1+a'x)$ and 
for the rotated cone constraint above \cite{AG:sub-cqmip}.

\begin{table}[t]
	\caption{Optimization with fractional linear functions.}
	\label{tab:frac}
	\small
	\setlength{\tabcolsep}{6pt}
	\begin{tabular}{c|rrrrrrrrrrr}
		\hline \hline
				%n=16
		$\lambda$	&	0.0	&	0.2	&	0.4	&	0.6	&	0.8	&	1.0	\\ \hline		
		gap (\%)	&	1,326.8	&	856.8	&	645.3	&	347.1	&	178.7	&	44.0	\\		
		cgap (\%)	&	90.8	&	61.5	&	41.3	&	21.5	&	9.6	&	0.0	\\		
		time (sec.)	&	83.3	&	117.2	&	206.6	&	261.4	&	84.5	&	40.8	\\		
		ctime (sec.)	&	44.1	&	88.9	&	69.5	&	71.1	&	12.3	&	0.0	\\		
		\# nodes	&	31,590.0	&	36,075.6	&	41,933.0	&	57,776.4	&	32,800.6	&	24,170.2	\\		
		\# cnodes	&	16,048.6	&	21,166.4	&	23,353.6	&	22,840.2	&	9,707.4	&	0.0	\\		
		\# cuts	&	27.6	&	27.8	&	30.4	&	23.2	&	23.4	&	22.0	\\ \hline \hline				
	\end{tabular}
\end{table}

Observe in Table~\ref{tab:frac} that the percentage integrality gaps are very large. This is due to the integer optimal values being close to zero. We see large reduction in integrality gaps when cuts ar added. 
For all instances the number of nodes and  the computation time
are reduced substantially with the largest improvement for the submodular case ($\lambda=1$) as expected. 

These computational experiments demonstrate that, when used as cutting planes, 
the valid inequalities from a submodular-supermodular decomposition of
general set functions can be effective to improve the branch-and-bound algorithms.

\ignore{
\section{Conclusion} \label{sec:conc}

We give a polar outer approximation for the epigraph for set functions. For submodular functions, polar relaxation yields the convex hull of the epigraph.  

}

\small
\bibliographystyle{apalike}
\bibliography{master}

\end{document}